\newcommand*\edge[1][]{\draw[edge,#1]}
\tikzstyle{largegraph}=[
\newcounter{ngraphpaths}
\newcounter{ncrossings}
\tikzstyle{showcrossings}=[
\ifnum \value{ngraphpaths}>1
  \pgfmathtruncatemacro\aend{\value{ngraphpaths}-1}
    \pgfmathtruncatemacro\bstart{\a+1}
\renewenvironment{proof}{\par {\sc {\bf Proof.}\hskip 5pt}}{\hfill \qed \par}
\newcommand{\undertilde}[1]{\ensuremath{\mathord{\vtop{\ialign{##\crcr
   $\hfil\displaystyle{#1}\hfil$\crcr\noalign{\kern1.5pt\nointerlineskip}
   $\hfil\tilde{}\hfil$\crcr\noalign{\kern1.5pt}}}}}}
\begin{document}

\title{There are no Cubic Graphs on 26 Vertices with Crossing Number 10 or 11}
\author{K. Clancy, M. Haythorpe, A. Newcombe and E. Pegg Jr}
\institute{Kieran Clancy
\at Flinders University\\
1284 South Road, Tonsley, Australia 5042\\
\email{kieran.clancy@flinders.edu.au}
\and
Michael Haythorpe - \emph{Corresponding Author}
\at Flinders University\\
1284 South Road, Tonsley, Australia 5042\\
\email{michael.haythorpe@flinders.edu.au}
\and
Alex Newcombe
\at Flinders University\\
1284 South Road, Tonsley, Australia 5042\\
\email{alex.newcombe@flinders.edu.au}
\and
Ed Pegg Jr
\at Wolfram Research\\
100 Trade Centre Drive, Champaign, IL 61820\\
\email{edp@wolfram.com}
}

\maketitle {\abstract We show that no cubic graphs of order 26 have crossing number larger than 9, which proves a conjecture of Ed Pegg Jr and Geoffrey Exoo that the smallest cubic graphs with crossing number 11 have 28 vertices. This result is achieved by first eliminating all girth 3 graphs from consideration, and then using the recently developed QuickCross heuristic to find good embeddings of each remaining graph. In the cases where the embedding found has 10 or more crossings, the heuristic is re-run with a different settings of parameters until an embedding with fewer than 10 crossings is found. We provide a minimal example of a cubic graph on 28 vertices with crossing number 10, and also exhibit for the first time a cubic graph on 30 vertices with crossing number 12, which we conjecture is minimal.}

\keywords{Crossing number, cubic graphs, conjecture, QuickCross heuristic, Coxeter graph, Levi graph, Tutte-Coxeter graph}

\section{Introduction}\label{sec-Introduction}

In this manuscript we restrict our consideration to simple, undirected, connected graphs. Consider such a graph $G = \left<V, E\right>$ where $V$ is the set of vertices, and $E : V \rightarrow V$ is the set of edges. A {\em graph drawing} is a mapping of the vertices and edges to the plane. A graph is said to be {\em planar} if it is possible to produce a graph drawing for that graph such that edges only intersect at vertices. Henceforth, we will take {\em crossings} to describe only those edge intersections that occur away from vertices, and only permit two edges to cross in one place (so if three edges cross simultaneously, it is viewed as three individual crossings). Unquestionably, the seminal result in planarity is Kuratowski's theorem, which states that a graph is planar if and only if it contains no subdivisions of $K_5$ or $K_{3,3}$ \cite{kuratowski}.

If a graph is non-planar, then one may ask how close it is to being planar. One such measure is the {\em crossing number}. For any given drawing of a non-planar graph, there will be a positive number of crossings. Then the crossing number is the minimum number of crossings over all valid drawings of that graph. The crossing number of graph $G$ is denoted $cr(G)$. For example, the Petersen graph \cite{petersen} is shown in Figure \ref{fig-petersen} in two orientations. The first, standard, drawing has five crossings. However, the second drawing has only two, which is known to be the minimum possible for that graph.

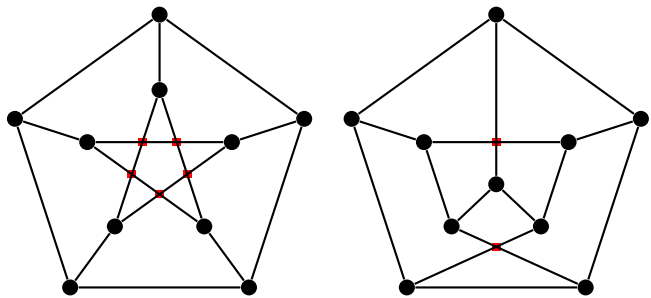
\begin{figure}[h!]
\begin{centering}
\begin{tikzpicture}[largegraph,scale=1,showcrossings]
\foreach \i [evaluate=\i as \a using 90-(\i-1)*360/5] in {1,...,5} {\node (o\i) at (\a:2) {}; \node (i\i) at (\a:1) {};}
\edge (o1) -- (o2) -- (o3) -- (o4) -- (o5) -- (o1);
\foreach \i in {1,...,5} \edge (o\i) -- (i\i);
\edge (i1) -- (i3) -- (i5); \edge (i1) -- (i4) -- (i2); \edge (i2) -- (i5);
\end{tikzpicture}\;\;\;\;\;%
\begin{tikzpicture}[largegraph,scale=1,showcrossings]
\foreach \i [evaluate=\i as \a using 90-(\i-1)*360/5] in {1,...,5} {\node (o\i) at (\a:2) {};}
\foreach \i [evaluate=\i as \a using 90-(\i-1)*360/5] in {2,...,5} {\node (i\i) at (\a:1) {};}
\node (c) at (270:0.25) {};
\edge (o1) -- (o2) -- (o3) -- (o4) -- (o5) -- (o1);
\edge (o2) -- (i2) -- (i5) -- (o5);
\edge (i2) -- (i3) -- (c) -- (i4) -- (i5);
\edge (o1) -- (c); \edge (o3) -- (i4); \edge (o4) -- (i3);
\end{tikzpicture}
\caption{The Petersen graph drawn with 5 crossings and a crossing minimal drawing with 2 crossings. \label{fig-petersen}}
\end{centering}
\end{figure}

The problem of finding the crossing number of a given graph is called the {\em crossing number problem}. It is NP-hard in general \cite{nphard}, but the problem is known to be fixed-parameter tractable \cite{kenichi}. For more information on this topic, we refer the interested reader to the excellent recent book by Schaefer \cite{schaeferbook} which details many aspects of the crossing number problem. There are also a number of useful surveys on this topic \cite{surveypaper,huangwang,schaefersurvey,vrto,winterbach}.

A question that may be asked is how many vertices are required for a graph to have a requested crossing number. If we restrict our consideration to cubic graphs (for which the crossing number problem is still NP-hard \cite{hlineny}), some results are known. In Table \ref{tab-cub} we list, for $k = 1, \hdots, 8$, the smallest order for which a cubic graph with crossing number $k$ exists, an example of one such cubic graph of that size, and the number of minimal examples; these are taken from Pegg Jr and Exoo \cite{exoo}. It should be noted that in \cite{exoo}, it was claimed that there are five minimal examples of cubic graphs with crossing number 8, however it has been subsequently determined during private communication between Pegg Jr and Eric Weisstein that two of them (labelled in \cite{exoo} as CNG 8D and CNG 8E) were erroneously listed, and the correct number is three.

\begin{table}[h]
\begin{center}
\hspace*{-0.7cm}\begin{tabular}{|c|l|l|c|}\hline
$k$ & Min $n$ & Example & \# of minimal examples\\
\hline 0 & 4 & $K_4$ & 1\\
\hline 1 & 6 & $K_{3,3}$ & 1\\
\hline 2 & 10 & Petersen graph & 2\\
\hline 3 & 14 & Heawood graph & 8\\
\hline 4 & 16 & M\"{o}bius-Kantor graph & 2\\
\hline 5 & 18 & Pappus graph & 2\\
\hline 6 & 20 & Desargues graph & 3\\
\hline 7 & 22 & Four (unnamed) graphs & 4\\
\hline 8 & 24 & McGee graph & 3\\
 \hline\end{tabular}
\end{center}
\caption{The minimum order of cubic graph for crossing number $k$, a famous example of each, and the number of minimal examples. For 7 crossings, none of the four minimal examples are famous graphs. All of the minimal examples are displayed in Pegg Jr and Exoo \cite{exoo}.}
\label{tab-cub}\end{table}

It is also worth noting at this point that the crossing numbers provided for many of the graphs listed in Table \ref{tab-cub}, although widely accepted as accurate and listed as such in numerous sources, have not been formally established in literature. We remedy that here, by using the excellent exact crossing minimisation solver of Chimani and Wiedera \cite{chimani} to confirm that all of the minimal examples (not just the named ones) listed in Pegg Jr and Exoo \cite{exoo} have their crossing numbers correctly listed, other than CNG 8D and CNG 8E as previously noted.

Results on minimal cubic graphs with crossing number larger than 8 have, to date, only been conjectured. It has been widely accepted that the Coxeter graph \cite{coxeter} on 28 vertices has crossing number 11, and the Levi graph \cite{levi} (also known as the Tutte-Coxeter graph) on 30 vertices has crossing number 13, although again these results have not been formally established in literature. We again remedy this oversight here by reporting that the exact solver \cite{chimani} confirms that these crossing numbers are accurate. Then, an open question posed by Pegg Jr and Exoo \cite{exoo} is whether any cubic graphs of order 26 have crossing number 11. More precisely, they conjectured the following.

\begin{conjecture}[Pegg Jr and Exoo \cite{exoo}]Define $a(k)$ to be the order of the smallest cubic graph with crossing number $k$. Then,

\begin{enumerate}\item[(i)] a(9) = a(10) = 26.
\item[(ii)] a(11) = 28.
\item[(iii)] a(13) = 30.\end{enumerate}
\label{conj}\end{conjecture}

Note that Pegg Jr and Exoo \cite{exoo} made no conjecture about the minimal size of cubic graphs with crossing number 12. In what follows, we answer Conjecture \ref{conj}(ii) in the affirmative, and so prove that the Coxeter graph is a minimal example of a cubic graph with crossing number 11. However, we also answer Conjecture \ref{conj}(i) in the negative, by showing that there are no cubic graphs on 26 vertices with crossing number 10. Finally, we exhibit a cubic graph on 30 vertices with crossing number 12, and conjecture that it is a minimal example of such a graph.

\section{Proving that Conjecture \ref{conj}(ii) is true}\label{sec-conjii}

We first seek to show that there are no cubic graphs on 26 vertices with crossing number 11. There are 2,094,480,864 cubic graphs on 26 vertices, up to isomorphism \cite{genreg}, that need to be considered. However, with the following simple argument, we can eliminate roughly 80\% of these.

\begin{proposition}No cubic graphs on 26 vertices with girth 3 have crossing number larger than 8.\label{prop-girth3}\end{proposition}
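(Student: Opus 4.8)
The plan is to exploit the presence of a triangle to reduce a girth-3 cubic graph on 26 vertices to a strictly smaller cubic graph whose crossing number is already known to be small. The key structural observation is that in a cubic graph, a triangle has exactly three edges leaving it (one from each of its three vertices, since each vertex has degree 3 and uses two of its edges inside the triangle). The standard operation here is \emph{$\Delta$-contraction}: delete the three triangle vertices and introduce a single new vertex joined to the three former neighbours (equivalently, contract the triangle to a point). This yields a new graph $G'$ that is again cubic, on $26 - 2 = 24$ vertices, and the contraction does not increase the crossing number: any drawing of $G'$ can be expanded back into a drawing of $G$ by replacing the contracted vertex with a small triangle without introducing new crossings, so $cr(G) \leq cr(G')$.

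First I would make precise the claim that the expansion step is crossing-free. In a drawing of $G'$, the new vertex has three incident edge-ends meeting it in some rotational order; I can excise a small disk around that vertex and draw the triangle inside the disk so that its three outgoing stubs connect to the three incident edges in the correct cyclic order, adding zero crossings. This gives the inequality $cr(G) \leq cr(G')$. Conversely it is not needed for the argument, so I would not belabour the reverse direction.

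Next I would invoke the known bound from Table~\ref{tab-cub}. Since $G'$ is a cubic graph on $24$ vertices, and the smallest cubic graph with crossing number $9$ has $26$ vertices (part (i) of the conjecture, but more usefully the established entries of the table show the minimum order for crossing number $8$ is already $24$ and for higher crossing numbers is at least $26$), every cubic graph on at most $24$ vertices has crossing number at most $8$. Hence $cr(G') \leq 8$, and therefore $cr(G) \leq cr(G') \leq 8 < 11$. The one subtlety I would check carefully is that $\Delta$-contraction preserves simplicity and connectivity and genuinely produces a cubic graph; a multi-edge could arise only if two of the three triangle-neighbours coincide or were already adjacent in a conflicting way, so I would verify that these degenerate configurations either cannot occur or can be handled by a short separate case analysis.

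The main obstacle I anticipate is precisely these degenerate cases in the contraction. If two triangle vertices share a common neighbour outside the triangle (so that $G'$ would acquire a multi-edge), or if the three triangle-neighbours are not distinct, then the naive contraction fails to yield a simple cubic graph on $24$ vertices and the table bound does not apply directly. I would need either to argue that such local configurations force $G$ itself to be small or highly structured (e.g.\ $K_4$ with pendant structure, which is impossible for a connected cubic graph on $26$ vertices unless it decomposes in a controlled way), or to use an alternative local reduction in those cases. Resolving this case distinction cleanly, and confirming that in every case the resulting smaller graph still lands within the regime covered by Table~\ref{tab-cub}, is where the real work of the proof lies.
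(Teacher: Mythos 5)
Your main line---contract the triangle to a single vertex, note that a drawing of the contracted graph $G'$ expands back to a drawing of $G$ with no new crossings (draw the triangle inside a small disk around the contracted vertex), hence $cr(G)\leq cr(G')\leq 8$ by Table~\ref{tab-cub}---is exactly the paper's argument, and you handle the generic case correctly. But you explicitly leave unresolved the degenerate case in which two triangle vertices share a neighbour outside the triangle, and that is a genuine gap: there the contraction creates a multi-edge, $G'$ is not a simple cubic graph, and the table bound no longer applies. Saying that resolving this ``is where the real work of the proof lies'' does not discharge it, so as written the proposal does not cover all girth-3 graphs on 26 vertices.

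The missing idea is small and is precisely how the paper closes the case split. If two triangle vertices $u,v$ have a common external neighbour $w$, then $u,v,w$ together with the third triangle vertex $x$ induce a diamond ($K_4$ minus an edge): $u$ and $v$ have all three of their edges inside it, and only $x$ and $w$ have one edge each leaving it. So the diamond plays exactly the role the triangle played before: contract the whole diamond to a point (equivalently, suppress the resulting degree-$2$ vertex, replacing the diamond by a single edge between its two external neighbours). The same disk-expansion argument shows the crossing number does not increase, and the result is a smaller simple cubic graph, again covered by the table. The only further degeneracy is when all three triangle vertices lie in second triangles; since each triangle vertex has exactly one edge leaving the triangle, this forces all three external edges to meet a single vertex, i.e.\ $G=K_4$, which is impossible on 26 vertices. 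With this three-way split---isolated triangle, diamond, or $K_4$---the proof is complete, and it is the paper's proof.
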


\begin{proof}Consider any cubic graph $G$ with girth 3. Then $G$ contains at least one triangle. Consider the three vertices in one such triangle. There are only three possibilities. Either none of the vertices are involved in a second triangle, two of vertices are, or all three are. These three cases are illustrated in Figure \ref{fig-girth3}. In the third case the graph must be $K_4$, which is planar. If none of the vertices are involved in a second triangle, it is clear that the triangle could be contracted to a single vertex without altering the crossing number, and what results is a smaller cubic graph with the same crossing number. In \cite{exoo} it was shown that no cubic graphs with fewer than 26 vertices have crossing number larger than 8, and hence the result follows immediately.

\begin{figure}[h!]
\begin{centering}
\begin{tikzpicture}[largegraph,scale=1.4]
\node (1) at (5,0) {}; \path (1) -- ++(0:1) node (2) {}; \path (1) -- ++(-120:1) node (3) {}; \path (1) -- ++(-60:1) node (4) {};
\edge (1) -- (2) -- (4) -- (3) -- (1) -- (4); \edge[bend right=60,looseness=1.5] (2) to (3);

\node (5) at (2.5,0) {}; \path (5) -- ++(0:1) node (6) {}; \path (5) -- ++(-120:1) node (7) {}; \path (5) -- ++(-60:1) node (8) {};
\edge (5) -- (6) -- (8) -- (7) -- (5) -- (8); \edge[densely dashed] (6) -- +(30:0.5); \edge[densely dashed] (7) -- +(210:0.5);

\node (9) at (0,0) {}; \path (9) -- ++(-120:1) node (10) {}; \path (9) -- ++(-60:1) node (11) {};
\edge (9) -- (10) -- (11) -- (9); \edge[densely dashed] (9) -- +(90:0.5); \edge[densely dashed] (10) -- +(210:0.5);
\edge[densely dashed] (11) -- +(-30:0.5);
\end{tikzpicture}
\caption{Possible triangle structures in a cubic graph.\label{fig-girth3}}
\end{centering}
\end{figure}
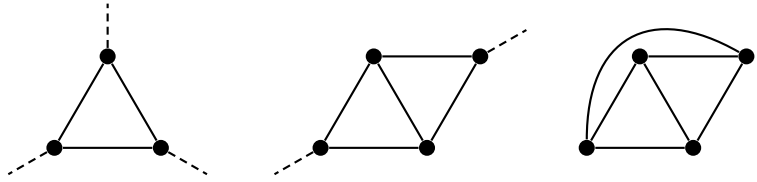

Then, the only remaining case is when two of the vertices are involved in a second triangle; we will show that in this case, one may also obtain a smaller cubic graph with the same crossing number, and hence the result from \cite{exoo} applies in this case as well. Because the graph is cubic, it is clear that the two triangles must form a diamond which can also be contracted to a single vertex without altering the crossing number. What results is a sub-cubic graph with a single vertex of degree 2. Denote this vertex $v$, and its two adjacent vertices $a$ and $b$. Then if edge $(a,b)$ is not present in the graph, vertex $v$ can be ``smoothed" in the sense that it is removed, and its two incident edges are merged into a single edge $(a,b)$, and a smaller cubic graph with the same crossing number results. If edge $(a,b)$ is present, it is clear that the sub-cubic graph also contains a triangle $a-b-v-a$ which can be contracted to a degree 2 vertex. This process can be repeated until a cubic graph results.\end{proof}

Proposition \ref{prop-girth3} indicates that we need only to consider cubic graphs on 26 vertices with girth at least 4. There are 432,757,568 such graphs up to isomorphism \cite{genreg}. For each of these graphs, we used the recently developed QuickCross heuristic \cite{quickcross}, a fast algorithm that finds valid embeddings which usually have an optimal or near-optimal number of crossings. For cubic graphs on 26 vertices, QuickCross can process several graphs per second, with the majority of that time used on read/write operations. We partitioned the graphs into sets of 50,000, resulting in roughly 8,000 individual jobs, each of which took up to 3 days to run, and distributed the jobs to over 400 cores on a High Performance Computer.

After the initial run with the default settings, embeddings with fewer than 11 crossings were obtained for 422,549,254 of the graphs, only leaving approximately ten million graphs to be considered more closely. By altering the parameters and random seed and re-running the remaining graphs, this number was further reduced to under 500,000 graphs. We continued this process, altering the parameters and random seeds until we were successful at obtaining an embedding with fewer than 11 crossings for every graph.

Next, we performed an additional test to ensure there was no errors arising from potential bugs with the QuickCross heuristic itself. For each graph, we took the embedding discovered by QuickCross and used it to construct the corresponding planarised graph. We then used the planarity algorithm of Hopcroft and Tarjan \cite{hopcroft} to confirm that the resulting graph was indeed planar. In this way, we verified that the embeddings found for each of the graphs were valid.

Note that we could also have eliminated any triangle-free 1-connected graphs from consideration; in the case of cubic graphs, these are all bridge graphs. It can be easily seen that for these graphs, the bridge (or any bridge, if there is more than one) can be removed, and the crossing number of the two remaining components can be computed and summed to give the crossing number of the original graph. Each of the two remaining components has a degree 2 vertex which can be safely removed, in the sense that its two incident edges are merged into a single edge. Since the original graph had no triangles, what remains are two cubic graphs containing $a$ and $24-a$ vertices respectively, for some even $4 \leq a \leq 20$. From Table \ref{tab-cub} it can be easily checked that triangle-free 1-connected cubic graphs of order 26 have crossing number no greater than 6. However, as we also wanted to use this experiment to test the robustness of QuickCross, we chose to include triangle-free 1-connected graphs in the experiment anyway.

The results of the experiment allow us to make the following claim.

\begin{theorem}There are no cubic graphs on 26 vertices with crossing number 11.\label{thm-cr11}\end{theorem}

It is trivial to see that Theorem \ref{thm-cr11} also implies there are no cubic graphs on fewer than 26 vertices with crossing number 11. Since the Coxeter graph on 28 vertices is known to have crossing number 11, this immediately answers Conjecture \ref{conj}(ii) in the affirmative.

\section{Proving that Conjecture \ref{conj}(i) is false}\label{sec-conji}

In \cite{exoo} it is seemingly implied, although not actually stated, that an example given of the McGee graph plus an edge (that is, the McGee graph where two particular edges are subdivided, and their new vertices joined by an edge) has crossing number 10. Indeed, this has been widely accepted as true, including being listed on the Online Encyclopedia of Integer Sequences \cite{oeis}. However, in Figure \ref{fig-26cross9} we provide two graph drawings of this graph. The first is a replica of the graph drawing given in \cite{exoo} which contains ten crossings, and the second is an alternative graph drawing with nine crossings. Hence, it is clear that the crossing number for this graph is at most nine. We have independently confirmed that the crossing number is indeed equal to nine for this graph, through the use of the exact crossing minimisation solver of Chimani and Wiedera \cite{chimani} which is available at http://crossings.uos.de.

\begin{figure}[h!]
\centering
\begin{tikzpicture}[largegraph,scale=0.7,showcrossings]
\clip (0.7,0.7) rectangle (8.3,9.3);
\node (1) at (1,9) {}; \node (2) at (7,9) {}; \node (3) at (2,8) {}; \node (4) at (6,8) {};
\node (5) at (8,8) {}; \node (6) at (3,7) {}; \node (7) at (4,7) {}; \node (8) at (6,7) {};
\node (9) at (3,6) {}; \node (10) at (5,6) {}; \node (11) at (7,6) {}; \node (12) at (2,5) {};
\node (13) at (3,5) {}; \node (14) at (4,5) {}; \node (15) at (5,5) {}; \node (16) at (2,4) {};
\node (17) at (3,4) {}; \node (18) at (4,4) {}; \node (19) at (6,4) {}; \node (20) at (2,3) {};
\node (21) at (4,3) {}; \node (22) at (5,3) {}; \node (23) at (7,3) {}; \node (24) at (3,2) {};
\node (25) at (1,1) {}; \node (26) at (8,1) {};
\edge (1) -- (2) -- (5) -- (4) -- (8) -- (7) -- (14) -- (15) -- (10) -- (11) -- (23);
\edge (1) -- (3) -- (7); \edge (4) -- (6) -- (9) -- (13) -- (12) -- (3);
\edge (5) -- (26) -- (25) -- (24) -- (17) -- (16) -- (20);
\edge (13) -- (18) -- (21) -- (22) -- (15);
\edge (18) -- (19) -- (23) -- (24); \edge (2) -- (11);
\edge (14) -- (17); \edge (8) -- (19); \edge (22) -- (26);
\edge (9) -- (10); \edge (20) -- (21); \edge (6) -- (16);
\edge[bend right=10] (12) to (25); \edge[bend right=10] (1) to (20);
\end{tikzpicture}  \;\;\; \begin{tikzpicture}[largegraph,scale=0.7,showcrossings]
\clip (-1,0.7) rectangle (8.5,9.3);
\node (1) at (1,9) {}; \node (2) at (7,9) {}; \node (3) at (1,8) {}; \node (4) at (7,8) {};
\node (5) at (1,7) {}; \node (6) at (2,7) {}; \node (7) at (7,7) {}; \node (8) at (2,6) {};
\node (9) at (3,6) {}; \node (10) at (6,6) {}; \node (11) at (7,6) {}; \node (12) at (3,5) {};
\node (13) at (4,5) {}; \node (14) at (2,4) {}; \node (15) at (3,4) {}; \node (16) at (4,4) {};
\node (17) at (5,4) {}; \node (18) at (6,4) {}; \node (19) at (7,4) {}; \node (20) at (3,3) {};
\node (21) at (6,3) {}; \node (22) at (3,2) {}; \node (23) at (4,2) {}; \node (24) at (5,2) {};
\node (25) at (1,1) {}; \node (26) at (7,1) {};
\edge (1) -- (3) -- (5) -- (6) -- (8) -- (9) -- (12) -- (13) -- (16) -- (17) -- (18) -- (19) -- (11) -- (7) -- (4) -- (2) -- (1);
\edge (8) -- (14) -- (15) -- (20) -- (22) -- (23) -- (24) -- (26) -- (25) -- (14); \edge (3) -- (10) -- (11); \edge (10) -- (13);
\edge (4) -- (9); \edge (17) -- (21) -- (23); \edge (19) -- (26); \edge (15) -- (16);
\edge[bend left=10] (6) to (18); \edge[bend right,looseness=0.5] (1) to (25); \edge[out=270,looseness=1.5] (5) to (22);
\edge[out=240,in=210,looseness=2] (12) to (24); \edge[out=290,in=10,looseness=2] (7) to (20);
\edge[out=290,in=0,looseness=1.2] (2) to (21);
\end{tikzpicture}
\caption{Two drawings of the McGee graph plus an edge. The first is the drawing displayed in \cite{exoo} with ten crossings, and the second is a drawing with nine crossings. \label{fig-26cross9}}
\end{figure}
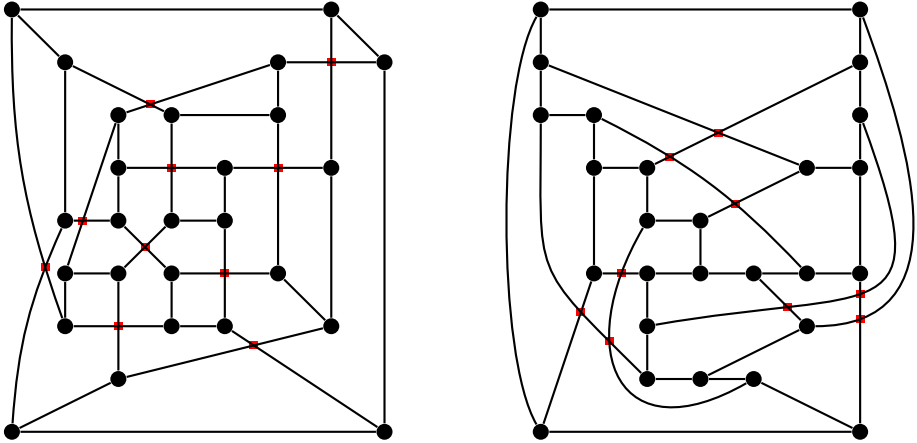

It should be noted that in \cite{exoo}, the authors exhaustively catalogued the crossing-maximal cubic graphs with up to 24 vertices, and found none with crossing number 9. Hence, the McGee graph plus an edge is a minimal example of such a graph, and we can conclude that $a(9) = 26$ as predicted in Conjecture \ref{conj}(i).

However, given that the McGee graph plus an edge is not an example of a cubic graph on 26 vertices with crossing number 10, we wondered if there were any other such examples. Obviously, the 80\% of graphs which were shown in Section \ref{sec-conjii} to have crossing number 8 or less do not provide an example. Furthermore, although the experiment conducted in Section \ref{sec-conjii} was willing to accept embeddings with precisely 10 crossings, in practice QuickCross returned embeddings with fewer than 10 crossings for approximately 97\% of the graphs, so these graphs also do not provide an example. At the conclusion of the experiment in Section \ref{sec-conjii}, we were left with 13,335,262 graphs for which the graph drawing obtained by QuickCross had precisely ten crossings. Of course, this is not to say that their crossing number is ten, just that the first example of a graph drawing with fewer than eleven crossings that was found by QuickCross had ten crossings.

We resubmitted each of these graphs to QuickCross, this time demanding that it continue to try different random seeds until a graph drawing with fewer than ten crossings was found. Having faith in Conjecture \ref{conj}(i), we assumed that we would find some graphs where this proved to be impossible. However, after a few days, QuickCross was successful at doing so for all 13,335,262 graphs. We further confirmed these results using the same check as described in Section \ref{sec-conjii}. The results of this experiment allow us to make the following, somewhat surprising, claim.

\begin{theorem}There are no cubic graphs on 26 vertices with crossing number 10.\label{thm-cr10}\end{theorem}

Again, it is trivial to see that Theorem \ref{thm-cr10} also implies that are no cubic graphs on fewer than 26 vertices with crossing number 10, and hence we can conclude that $a(10) > 26$, which implies that Conjecture \ref{conj}(i) is false. The set of ordered edge crossings, corresponding to a valid embedding, for each of the cubic graphs on 26 vertices with girth at least four is available upon request.

We now provide an instance of a cubic graph on 28 vertices which does have crossing number 10, drawn optimally in Figure \ref{fig-2810}.

\begin{figure}[h!]
\centering
\begin{tikzpicture}[largegraph,scale=0.6,showcrossings]
\node (1) at (4,13) {}; \node (2) at (2,12) {}; \node (3) at (3,12) {}; \node (4) at (4,12) {};
\node (5) at (3,11) {}; \node (6) at (7,11) {}; \node (7) at (3,10) {}; \node (8) at (6,9) {};
\node (9) at (7,9)  {}; \node (10) at (8,9) {}; \node (11) at (6,8) {}; \node (12) at (7,8) {};
\node (13) at (3,7) {}; \node (14) at (4,7) {}; \node (15) at (2,6) {}; \node (16) at (3,6) {};
\node (17) at (4,6) {}; \node (18) at (6,6) {}; \node (19) at (7,6) {}; \node (20) at (7,5) {};
\node (21) at (8,5) {}; \node (22) at (1,3) {}; \node (23) at (2,3) {}; \node (24) at (3,3) {};
\node (25) at (8,3) {}; \node (26) at (1,2) {}; \node (27) at (8,2) {}; \node (28) at (1,1) {};
\edge (1) -- (4) -- (3) -- (2); \edge (3) -- (5) -- (7); \edge (8) -- (11) -- (12) -- (9);
\edge (28) -- (26) -- (22) -- (23) -- (24); \edge (25) -- (27) -- (28); \edge (1) -- (6) -- (10);
\edge (23) -- (15) -- (16) -- (13) -- (14) -- (17) -- (18) -- (19) -- (20) -- (21);
\edge[bend left] (10) to (27);
\edge (24) -- (25); \edge[bend right=10] (26) to (21);
\edge (17) -- (24); \edge[bend right=15] (16) to (20);
\edge[bend right=70,looseness=1.5] (1) to (15); \edge (2) -- (13);
\edge[bend right=30] (2) to (28); \edge[bend right=10] (7) to (22);
\edge (5) -- (18); \edge  (7) -- (8); \edge  (11) -- (14);
\edge (4) -- (9); \edge  (6) -- (8); \edge  (10) -- (19);
\edge (12) -- (21); \edge[bend left] (9) to (25);
\end{tikzpicture}
\caption{An optimal drawing of a minimal example of a cubic graph with crossing number 10. \label{fig-2810}}
\end{figure}
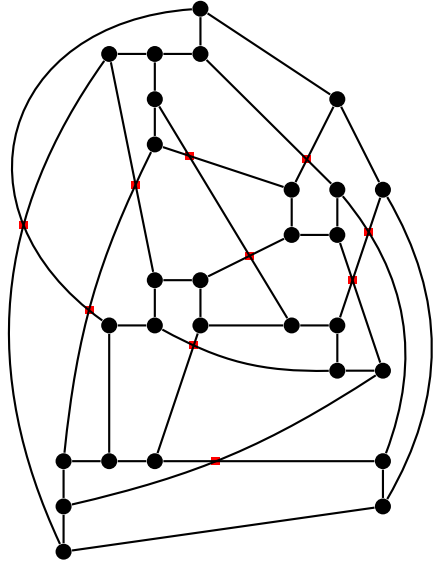

To the best of the authors' knowledge, the graph in Figure \ref{fig-2810} is not a famous graph. We have confirmed that the crossing number is 10 through the use of Chimani and Wiedera's solver \cite{chimani}. Hence, we can conclude that $a(10) = 28$.

\section{Cubic graphs with larger crossing numbers}

When considering the famous examples listed in Table \ref{tab-cub}, one can make the following observation: for each choice of $k = 1, \hdots, 8$, there exists a minimal cubic graph with crossing number $k$ that is also girth-maximal for cubic graphs of that order. This is also the case for the McGee graph plus an edge ($k = 9$), the graph in Figure \ref{fig-2810} ($k = 10$), the Coxeter graph ($k = 11$) and the Levi graph ($k = 13$). This observation leads us to propose the following conjecture.

\begin{conjecture}Define $c(n)$ to be the maximum crossing number among all cubic graphs of order $n$, and $g(n)$ to be the maximum girth among all cubic graphs of order $n$. Then, for each $n = 4, 6, 8, \hdots$ there exists a cubic graph of order $n$ which has girth $g(n)$ and crossing number $c(n)$.\label{conj-maximal}\end{conjecture}

For $n = 8$, the Wagner graph is an example, and for $n = 12$, the Twinplex graph \cite{robertson} is an example. These, along with the examples provided above, verify Conjecture \ref{conj-maximal} for all even $4 \leq n \leq 30$.

We now turn our attention to cubic graphs with crossing number 12. As shown previously, $a(11) = 28$ and $28 \leq a(13) \leq 30$. It seems that $a(12)$ is likely to be either 28 or 30, and given the compelling evidence for Conjecture \ref{conj-maximal}, it seems appropriate to search for an example of a 12-crossing cubic graph among the set of high-girth cubic graphs.

First, we considered 28-vertex cubic graphs. For cubic graphs of this order, the maximum girth is 7, and there are 21 such graphs (including the Coxeter graph). We used QuickCross to search for drawings with fewer than 12 crossings, and it was successful in doing so in all cases. Although this does not prove definitively that no cubic graphs on 28 vertices have crossing number 12, it does provide solid evidence given Conjecture \ref{conj-maximal}.

We then turned our attention to 30-vertex cubic graphs. There is a single graph with girth 8 (the Levi graph) which has crossing number 13, and then 545 graphs with girth 7. Again, we used QuickCross to search for drawings of these 545 graphs with fewer than 12 crossings, and it was successful in doing so for 544 of the graphs. For the one remaining graph, we used \cite{chimani} to confirm that it does have crossing number 12. Although it does not appear to be a famous graph, it can be viewed as the result of excising an edge from the Levi graph, and then inserting a new edge. We display this graph in Figure \ref{fig-3012}, in one of its optimal embeddings.

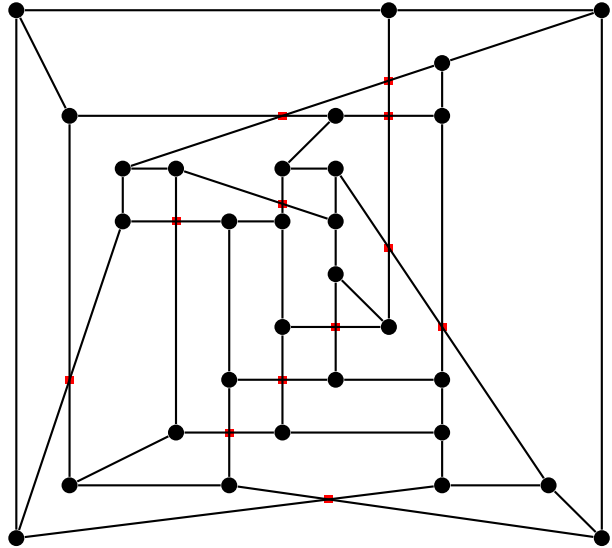
\begin{figure}[h!]\begin{center}\begin{tikzpicture}[largegraph,scale=0.7,showcrossings]
\node (1) at (10,1) {}; \node (2) at (8,9) {}; \node (3) at (11,10) {}; \node (4) at (1,8) {}; \node (5) at (6,8) {};
\node (6) at (8,8) {}; \node (7) at (2,7) {}; \node (8) at (3,7) {}; \node (9) at (5,7) {}; \node (10) at (6,7) {};
\node (11) at (2,6) {}; \node (12) at (4,6) {}; \node (13) at (5,6) {}; \node (14) at (6,6) {}; \node (15) at (4,3) {};
\node (16) at (6,5) {}; \node (17) at (5,4) {}; \node (18) at (6,3) {}; \node (19) at (8,3) {}; \node (20) at (3,2) {};
\node (21) at (5,2) {}; \node (22) at (8,2) {}; \node (23) at (7,4) {}; \node (24) at (1,1) {}; \node (25) at (4,1) {};
\node (26) at (11,0) {}; \node (27) at (0,0) {}; \node (28) at (8,1) {}; \node (29) at (7,10) {}; \node (30) at (0,10) {};
\edge (1) -- (10); \edge (1) -- (26); \edge (1) -- (28); \edge (2) -- (3); \edge (2) -- (6);
\edge (2) -- (7); \edge (3) -- (26); \edge (3) -- (29); \edge (4) -- (5); \edge (4) -- (24);
\edge (4) -- (30); \edge (5) -- (6); \edge (5) -- (9); \edge (6) -- (19); \edge (7) -- (8);
\edge (7) -- (11); \edge (8) -- (14); \edge (8) -- (20); \edge (9) -- (10); \edge (9) -- (13);
\edge (10) -- (14); \edge (11) -- (12); \edge (11) -- (27); \edge (12) -- (13); \edge (12) -- (15);
\edge (13) -- (17); \edge (14) -- (16); \edge (15) -- (18); \edge (15) -- (25); \edge (16) -- (18);
\edge (16) -- (23); \edge (17) -- (21); \edge (17) -- (23); \edge (18) -- (19); \edge (19) -- (22);
\edge (20) -- (21); \edge (20) -- (24); \edge (21) -- (22); \edge (22) -- (28); \edge (23) -- (29);
\edge (24) -- (25); \edge (25) -- (26); \edge (27) -- (28); \edge (27) -- (30); \edge (29) -- (30);
\end{tikzpicture}\caption{The cubic graph on 30 vertices with crossing number 12, drawn in one of its optimal embeddings.\label{fig-3012}}\end{center}\end{figure}

Given the above discussion, we propose the following conjecture. If there is a counterexample to Conjecture \ref{conj-final}, it must have 28 vertices and girth 4, 5 or 6.

\begin{conjecture}The graph shown in Figure \ref{fig-3012} is a minimal example of a cubic graph with crossing number 12.\label{conj-final}\end{conjecture}

Finally, we conclude this paper with Table \ref{tab-cub2}, which contains a list of the smallest known cubic graphs with crossing numbers 9 and above, reproduced from \cite{weisstein} where these findings were recently summarised. The graph labelled $GP(13,5)$ is one of the generalized Petersen graphs \cite{watkins}. The edge-excised Coxeter and Levi graphs are displayed in \cite{weisstein}.

\begin{table}[h!]
\begin{center}
\hspace*{-0.7cm}\begin{tabular}{|c|l|l|}\hline
$k$ & Min $n$ known & Examples \\
\hline  &  & GP(13,5)\\
9 & 26 & Edge-excised Coxeter Graph\\
& & Graph shown in Figure \ref{fig-26cross9} \\
\hline 10 & 28 & Edge-excised Levi graph\\
& & Graph shown in Figure \ref{fig-2810} \\
\hline 11 & 28 & Coxeter graph\\
\hline 12 & 30 & Graph shown in Figure \ref{fig-3012}\\
\hline 13 & 30 & Levi Graph \\
 \hline\end{tabular}
\end{center}
\caption{The known smallest instances of cubic graphs with crossing numbers between 9 and 13. The examples with 9, 10 and 11 crossings are known to be minimal, the other cases are only conjectured to be minimal.}
\label{tab-cub2}\end{table}

\begin{acknowledgements}We are greatly indebted to Tilo Wiedera and Markus Chimani for their tireless patience in manually running their exact crossing minimisation solver on many of the more difficult instances in this paper. We also thank Eric Weisstein for numerous useful conversations that significantly improved this paper.\end{acknowledgements}

\end{document}